\newtheorem{theorem}{Theorem}[section]
\theoremstyle{definition}
\newtheorem{definition}[theorem]{Definition}
\newtheorem{prop}{Proposition}
\theoremstyle{remark}
\newtheorem{remark}[theorem]{Remark}
\numberwithin{equation}{section}
\newcommand{\norm}[1]{\left\lVert#1\right\rVert}
\newcommand{\ie}{\textit{i.e.,\ }}
\newcommand{\eg}{\textit{e.g.,\ }}
\newcommand{\dd}{\textrm{d}}
\begin{document}

\title[Geometric Lagrangian Descriptor]{Geometric parametrisation of Lagrangian
	Descriptors for 1 degree-of-freedom systems}
\author{R\'emi P\'edenon-Orlanducci}
\address{ENSTA Paris, Institut Polytechnique de Paris, 91120 Palaiseau, France}
\email{remi.pedenon-orlanducci@ensta-paris.fr}

\author{Timoteo Carletti}
\address{naXys, Namur Institute for Complex Systems, University of Namur, rue de Bruxelles 61, B5000, Namur, Belgium}
\email{timoteo.carletti@unamur.be}

\author{Anne Lemaitre}
\address{naXys, Namur Institute for Complex Systems, University of Namur, rue de Bruxelles 61, B5000, Namur, Belgium}
\email{anne.lemaitre@unamur.be}

\author{J\'er\^ome Daquin}
\address{
naXys, Namur Institute for Complex Systems, University of Namur, rue de Bruxelles 61, B5000, Namur, Belgium.
School of Enginnering \& IT, University of New South Wales Canberra at ADFA, ACT 2600, Canberra, Australia}
\email{jerome.daquin@unamur.be}


\date{\today}

\date{\today}

\dedicatory{}
\maketitle
\begin{abstract}
	Lagrangian Descriptors (LDs) are scalar quantities able to reveal separatrices, manifolds of hyperbolic saddles, and chaotic seas of dynamical systems. 
	A popular version of the LDs consists
	in computing the arc-length of trajectories over a calibrated time-window. 
	Herein we introduce and exploit 
	an intrinsic geometrical parametrisation of LDs, free of 
	the time variable, for $1$ degree-of-freedom Hamiltonian systems. 
	The parametrisation depends solely on the energy of the system and on the geometry of the associated level curve. We discuss applications of this framework on classical 
	problems on the plane and cylinder, including the cat’s eye, $8$-shaped and fish-tail separatrices. 
	The developed apparatus 
	allows to characterise semi-analytically the rate at which 
	the derivatives of the geometrical LDs  become singular when approaching the separatrix. 
	For the problems considered, the same power laws of divergence are found irrespective from the dynamical system. 
	Some of our  results are connected with existing estimates obtained 
	with the temporal LDs 
	under approximations.  
	The  geometrical formalism provides alternative insights of the mechanisms driving this dynamical indicator. 
\end{abstract}

\section{Introduction}\label{sec:intro}
Over the last decade,  Lagrangian Descriptors (LDs)  
have proven their abilities to reveal the template
of dynamical systems \citep{jaMa09,cMe10}. They have been used successfully in a number of instances to portray  dynamical structures such as separatrices and manifolds of hyperbolic saddles \citep{amMa13,cLo17,rCr21}, manifolds of normally hyperbolic manifolds (NHIM) \citep{nSh19} or chaotic seas \citep{fRe19}. Whilst their initial development is rooted in geophysical flows, LDs expanded to discrete systems and data sets \citep{cMe14,cLo15,cGa20}, found applications in dynamical chemistry \citep{cGa17} or billiards dynamics \citep{cGa21}.    
In this contribution, we consider autonomous differential systems
\begin{align}\label{eq:ODE}
\dot{x}=f(x),\, x \in D \subset \mathbb{R}^{n}, 
\end{align} 
where the vector field $f \in C^{k}$, $k \ge 1$, with a particular emphasis to the case $n=2$. 
For Eq.\,(\ref{eq:ODE}), being provided an initial condition $x_{0}$ and a time $t >0$, the LD  is defined as the scalar  
\begin{align}\label{eq:LD}
\textrm{LD}(x_{0},t) = \int_{-t}^{t}
(g \circ h)
\big(
\phi^{s}(x_{0})
\big) \, \dd s,
\end{align}
where $\phi^{s}$ denotes the flow (hence the qualification ``Lagrangian'') at time $s$ associated to Eq.\,(\ref{eq:ODE}).
Common choices  of observables  
in the literature are $h=f$ (the vector field itself), and 
$g(u)=\norm{u}_{2}=\sqrt{\sum_{i=1}^{n}u_{i}^{2}}$ (see \eg \cite{jaMa09,cMe10}) or 
$g(u)=\sum_{i=1}^{n}\vert u_{i}\vert^{p}$ for $p \in (0,1]$ (see \eg \cite{cLo17}). 
The observables encapsulate a bounded, positive quantity
that is an intrinsic geometrical and/or physical property of the dynamical system
along a trajectory (over a finite time), as stated in \cite{amMa13}.
In the subsequent we consider $h=f$ and $g$ given by the Euclidean norm. Under this setting, Eq.\,(\ref{eq:LD}) becomes
\begin{align}\label{eq:LDlength}
\textrm{LD}(x_{0},t) = \int_{-t}^{t}
\norm{\dot{x}(s)}_{2}
\, \dd s,
\end{align}
and corresponds to the arc-length of the trajectory starting from $x_{0}$ and
computed over the time window $[-t,t]$. 

The ability of the LD method to reveal hyperbolic structures in the phase space relies on the loss of regularity of the LD metric when transversally crossing critical energy levels, \ie associated separatrices, a property initially refereed to as ``\textit{abrupt change}'' 
and later termed as ``\textit{singular feature of the LDs}'' (see \eg \cite{cLo17}). Besides heuristic arguments discussed in \cite{amMa13}, precise and rigorous analytical justifications have been presented on planar linear models, such as the linear saddle or rotated version of it \citep{amMa13,cLo17}. In essence, by taking advantage of the relatively simple and explicit form of the flows, the leading term of Eq.\,(\ref{eq:LDlength}) can be estimated explicitly by using an ad-hoc splitting of the time window $[-t,t]$ over which the Cauchy condition is advected. The loss of regularity of the LD metric when crossing separatrices can then be established  from this estimation. 
The dependence upon the choice of  $t$ will be discussed in the following. 

For higher dimensional dynamical systems, for example 
in the framework of nearly integrable Hamiltonian systems supporting separatrix splitting, in order to portray finite pieces of the manifolds, it is customary to separate Eq.\,(\ref{eq:LD}) as
\begin{align}
\textrm{LD}(x_{0},t) = \textrm{LD}^{+}(x_{0},t)+
\textrm{LD}^{-}(x_{0},t),
\end{align} 
where 
\begin{align}
\textrm{LD}^{+}(x_{0},t) = \int_{0}^{t} 
\norm{\dot{x}(s)}_{2}
\, \dd s,
\end{align}
and 
\begin{align}
\textrm{LD}^{-}(x_{0},t)=\textrm{LD}(x_{0},t)
-
\textrm{LD}^{+}(x_{0},t),
\end{align}
in order to delineate respectively the 
stable and unstable manifolds associated to the hyperbolic invariant (\eg a saddle fixed point, or higher dimensional hyperbolic invariant like a  NHIM). 
As customary with dynamical indicators, there are no general rules to determine the time window over which the method succeeds. 
The trade-off between the computational burden (large final time $t$) and the ability of the method to detect quickly structures (small time regime) is calibrated by trial and error or saturation checks,  
even though the knowledge of specific timescales (\eg $e$-folding time, period of periodic orbits) or physical constraints might guide  the numerical experiments. 
It is worthwhile to underline the absence of need of  variational dynamics associated to a deviation vector in the derivation of the LDs (as it is the case for Lyapunov exponents or other variational methods, like the Fast Lyapunov Indicator or the mean exponential growth of nearby orbits, see \cite{chSk10} for a general review).  
In particular, 
Eq.\,(\ref{eq:LD}) is appealing from the numerical standpoint and constitutes an orbit-based only diagnostic. As we already mentioned, and contrarily to  variational methods,
the LDs do not discriminate hyperbolic trajectories through the final value of the indicator. Instead, and rather similarly to the frequency analysis of dynamical system method \citep{jLa93}, the \textit{regularity} of the LD application is central to obtain the global picture of the system. The hyperbolic structures are then located or portrayed by studying the regularity of the LDs computed in lower dimensional space, \eg by restricting the initial condition to a domain $\mathcal{D} \subset \mathbb{R}$ (a one-dimensional map, also called a landscape) or 
to a subset $\mathcal{D} \subset \mathbb{R}^{2}$ (map).
A heat map can be computed to visualise the results,
similarly to the method of painting the energy integral over the phase space \citep{cSh90}.\\

In the following, we complement the theory of LDs for conservative $1$ degree-of-freedom (DoF) problems having one hyperbolic saddle point and supporting different separatrices topologies. The paper is outlined as follows:
\begin{itemize}
	\item In Section \ref{sec:pendulum}, we use the framework of the pendulum model to introduce 
	an intrinsic and time-free parametrisation of the LD. The parametrisation $\ell_{E}$, corresponding to  the length of an orbit in the phase space, 
	depends solely on the specified energy $E$ and the geometry of the associated level curve. Some properties of this observable are studied and discussed against their temporal counterpart. We highlight that this indicator succeeds in detecting the separatrix. The new definition allows to develop a convenient semi-analytical apparatus useful to characterize the rate at which $\vert \dd \ell_{E}/\dd E\vert$ becomes singular when $E \to E_{\textrm{sx}}$, the energy labeling the system separatrix.
	\item In Sections \ref{sec:Duffing} and \ref{sec:FishTail}, the previous steps are repeated on Hamiltonian models supporting respectively an $8$-shaped 
	and fish-tail separatrix.  
	Despite the different topologies of those critical curves, 
	the same laws of divergence for $\vert \dd \ell_{E}/\dd E\vert$ are revealed.  
\end{itemize}
We close the paper by summarising our conclusions. 

\section{Revisiting  Lagrangian Descriptors on the pendulum}\label{sec:pendulum} 
We start by revisiting the LDs  on the pendulum model  
whose Hamiltonian is 
\begin{align}\label{eq:pendulum}
\mathcal{H}(r,\theta)=\frac{r^{2}}{2}
-\cos \theta -1, \,(r,\theta) \in \mathcal{D}=\mathbb{R}\times [0,2\pi].
\end{align}
The constant term $-1$ in Eq.\,(\ref{eq:pendulum}) ensures that $E=0$  on the separatrix.
The phase space contains two equilibrium points, one elliptic at the origin  
$(0,0)$ and one hyperbolic  at $(0,\pi)$. 
The phase space is organised by librational ($E<0$) and circulational ($E>0$) motions separated by the separatrix $(E=0)$. 
Moving along the line $\theta=0$ for  increasing values of $r \ge 0$, we reach the apex of the separatrix 
by solving for $\delta r$ the equation 
\begin{align}
\mathcal{H}(\delta r,0)=-2,
\end{align}
\ie for $\delta r=2$.
The full aperture of the cat-eye separatrix has thus the width $\Delta r = 2\delta r = 4$.  
We refer the reader to Fig.\,\ref{fig:pendulum} for a visualisation of the phase space of this model obtained by using the level-set method.
For a generic point $x_{0}=(r_{0},\theta_{0})$, trying to estimate the LD given in Eq.\,(\ref{eq:LD}) is cumbersome and requires the knowledge of the flow and to manipulate  elliptic functions. 
On the other hand, for $1$-DoF systems, as orbits coincide with level curves, an approximate value of their lengths in the phase space is rather easy to pin down by visual inspection.   
This observation leads us to introduce a more  geometric version of the LD defined as follow. 
\begin{definition}[Geometrical Lagrangian Descriptor]
	Given an energy level $E$, the geometrical LD, denoted  $\ell(E)$, is the scalar corresponding to  the length of the level curve  $\mathcal{H}(r,\theta)=E$. 
\end{definition} 
Let us emphasise that working with the energy level, de facto accounts to deal with
an infinitely large time interval and thus by removing any time dependence of $\ell(E)$.
This definition is general enough and is discussed further in the two remarks below. 
For the pendulum model of Eq.\,(\ref{eq:pendulum}), 
note that for
$E < \min_{\theta} (-\cos \theta - 1)=-2$, the set of level curves is empty and we thus consider the former definition for $E \ge -2$ only.  
Since motions are bounded,  we have $\ell(E) \in \mathbb{R}^{+}$.  
From the symmetry of the phase space, for $r \ge 0$, one can interpret the level curve $\mathcal{H}(r,\theta)=E$ as the planar curve, parametrised by $\theta$, given by
\begin{align} 
r(\theta;E)=\sqrt{2(E + \cos \theta +1)}. 
\end{align}
By exploiting the formula for the length of a curve, we thus might write    
$\ell(E)=2\tilde{\ell}(E)$ with
\begin{align}\label{eq:lE}
\tilde{\ell}(E)
=
\int_{\mathcal{D}_{E}} 
\sqrt{1+\Big(\frac{\dd r}{\dd \theta}\Big)^2}\,
\dd \theta,
\end{align}
and, after some straightforward algebraic computations, one gets 
\begin{align}\label{eq:lePend}  
\tilde{\ell}(E)=\int_{\mathcal{D}_{E}} 
\sqrt{1 + \frac{\sin ^2 \theta}{2(E + \cos \theta +1)}}
\,
\dd \theta. 
\end{align}      
The domains of integration are respectively defined as
\begin{align}
\left\{
\begin{aligned}
& \mathcal{D}_{E} = (\arccos(-E-1),-\arccos(-E-1)), \,\textrm{if} \ E < 0,  \notag \\
& \mathcal{D}_{E} = [-\pi,\pi], \textrm{otherwise}. 
\end{aligned}
\right.
\end{align}

\begin{figure}
	\centering
	\includegraphics[width=0.5\linewidth]{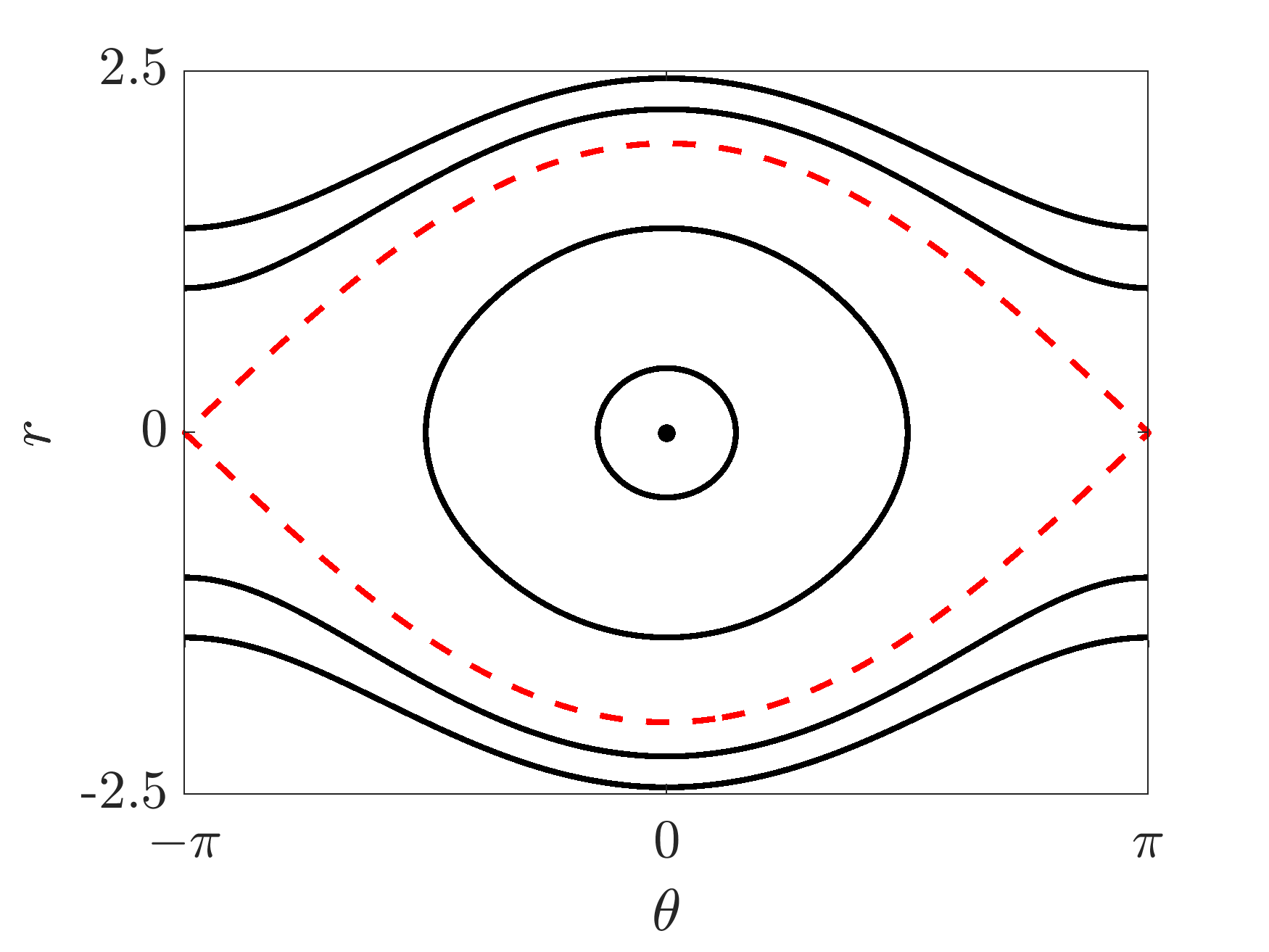}
	\caption{Phase space of the pendulum supporting the cat's eye separatrix (dashed  curve). For $1$-DoF system, the phase space approach allows to introduce a geometrical LD, a natural counterpart of the temporal LD but free of the time variable. The geometrical LD corresponds to the length $\ell(E)$ of the curve associated to the energy level $E$.}
	\label{fig:pendulum}
\end{figure}

We refer to the LD quantity defined in Eq.\,(\ref{eq:LD}) as \textit{temporal} LD, to emphasize the difference with the \textit{geometrical} LD 
just introduced\footnote{
	Let us remark that we refereed to the quantity $\ell(E)$ as `geometrical Lagrangian Descriptor' even if $\ell(E)$ has no Lagrangian nature as it does not rely on the flow.}. 
The geometrical LD has some benefits over the temporal formulation. 
Firstly, the expression does not rely on the time and the initial Cauchy condition $x_{0}$, but solely on the energy $E$. 
Secondly, its analytical expression can be derived easily through algebraic manipulators and numerically 
estimated. 
In particular, this last step does not rely on the knowledge of the flow (or any approximation of it, \eg either locally through linearisation or numerical approximation through solvers). 
Even though the geometrical LD we just introduced resemble its temporal counterpart, in the sense that both quantities are based on the length of orbits, properties of  $\ell(E)$ have not been investigated in the LD literature. To this point we now turn our efforts.

\begin{remark}
	[Geometrical LD for conservative 1-DoF systems]
	The definition of  $\ell(E)$ introduced on the pendulum model is general.
	For example, let us consider a mechanical system for which Newton's equations are 
	\begin{align}\label{eq:NewtonMech}
	\ddot{x}=-\nabla_{x}V(x), \, x \in \mathcal{I}\subset \mathbb{R},
	\end{align} 
	where $V(x)$ denotes a potential energy. 
	The Eq.\,(\ref{eq:NewtonMech}) admits the Hamiltonian formulation
	\begin{align}
	\mathcal{H}(x,v)=\frac{v^{2}}{2} + V(x),
	\end{align}
	where $v=\dot{x}$. 
	For $E \ge \min_{x}V(x)$, the two branches 
	of the level curves associated to the energy level $E=\mathcal{H}(x,v)$ are
	given by $v_{\pm}=\pm\sqrt{2(E-V(x))}$.
	The formula given in Eq.\,(\ref{eq:lE}) becomes
	\begin{align}
	\ell(E)=\int
	\sqrt{1 + 
		\frac{(\nabla_{x}V(x)^2)}{2(E-V(x))}}\,
	\dd x, 
	\end{align} 
	where the integral is computed over a suitable range of the variable $x$.
\end{remark}

\begin{remark}[First integral]
	The mechanical systems we discussed have the form ``kinetic'' plus ``potential'' energy and the total energy is preserved.    
	If more generally Eq.\,(\ref{eq:ODE}) possesses a first integral $I$, \ie a function $I: \mathcal{D} \to \mathbb{R}$ which satisfies
	\begin{align}
	\forall t, \, \forall x \in \mathcal{D}, \,
	I\big(\phi^{t}(x)\big)=I(x),
	\end{align}
	then, in the case $n=2$, orbits are constrained on the curves which are the level curves of the first integral $I$. The former construction can then be repeated.  
\end{remark}	

\subsection{Some properties of $\ell(E)$}   
The numerical evaluation of the continuous function  $E \mapsto \ell(E)$  over the domain $E \in [-2,1]$ is shown in the left panel of Fig.\,\ref{fig:leldpendulum}. The graph highlights that  
i) the length reaches a maximum at $E=0$ (the energy labeling the separatrix) and  
ii) the function $E \mapsto \ell(E)$ is not differentiable at $E=0$ (cusp point). The first statement can be proven rigorously.

\begin{prop}\label{prop:lEpendulum}
	Consider the pendulum model of Eq.\,(\ref{eq:pendulum}) 
	and the lengths  $\ell(E)$ for $E \ge -2$. 
	The value $\ell(E)$ is maximal on the separatrix,  \ie for $E=0$.  
\end{prop}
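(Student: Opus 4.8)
The plan is to prove the stronger statement that $E\mapsto\ell(E)$ is strictly increasing on $[-2,0]$ and strictly decreasing on $[0,+\infty)$, so that $E=0$ is in fact the unique maximiser. Since $\ell(E)=2\tilde{\ell}(E)$ it suffices to work with $\tilde{\ell}$ from Eq.\,(\ref{eq:lePend}). A preliminary remark is that $\tilde{\ell}(0)$ is finite: at $E=0$ the identity $\sin^{2}\theta=(1-\cos\theta)(1+\cos\theta)$ turns the integrand into $\sqrt{(3-\cos\theta)/2}$, so the apparent singularity of Eq.\,(\ref{eq:lePend}) at $\theta=\pm\pi$ is removable and $\tilde{\ell}(0)=\int_{-\pi}^{\pi}\sqrt{(3-\cos\theta)/2}\,\dd\theta<+\infty$; this is what makes the comparisons below meaningful.

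For the circulational branch $E>0$ the argument is immediate, because the integration domain is then frozen at $\mathcal{D}_{E}=[-\pi,\pi]$. For every $\theta$ with $\sin\theta\neq 0$ one has $\frac{\sin^{2}\theta}{2(E+\cos\theta+1)}<\frac{\sin^{2}\theta}{2(\cos\theta+1)}$, since a strictly positive number is added to an already positive denominator; integrating this pointwise inequality, which degenerates only on the null set $\{0,\pm\pi\}$, gives $\tilde{\ell}(E)<\tilde{\ell}(0)$, and more generally that $\tilde{\ell}$ is strictly decreasing on $[0,+\infty)$.

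The librational branch $-2\le E<0$ is the delicate part. The difficulty is structural: here $\mathcal{D}_{E}=(-\theta_{\max},\theta_{\max})$ with $\theta_{\max}=\arccos(-E-1)$ moves with $E$, and the integrand acquires integrable singularities at the endpoints, so a direct comparison of integrands is unavailable. The plan is to normalise the problem with two successive changes of variable. First, using the evenness in $\theta$ and then $u=\cos\theta$, the integrand collapses, $\frac{1}{\sqrt{1-u^{2}}}\sqrt{1+\frac{1-u^{2}}{2(E+u+1)}}=\sqrt{\frac{1}{1-u^{2}}+\frac{1}{2(E+u+1)}}$, so that $\tilde{\ell}(E)=2\int_{-E-1}^{1}\sqrt{\frac{1}{1-u^{2}}+\frac{1}{2(E+u+1)}}\,\dd u$. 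Second, the affine substitution $u=1-(E+2)t$, $t\in[0,1]$, sends the moving endpoints to $0$ and $1$ and, with $a:=E+2\in(0,2]$, produces the clean form
\begin{align}\label{eq:lEnormalised}
\tilde{\ell}(E)=2\sqrt{a}\int_{0}^{1}\sqrt{\frac{1}{t(2-at)}+\frac{1}{2(1-t)}}\,\dd t .
\end{align}
Now both factors on the right of Eq.\,(\ref{eq:lEnormalised}) are positive and strictly increasing in $a$ — trivially $\sqrt{a}$, and the integral because $\partial_{a}\big(t(2-at)\big)^{-1}=(2-at)^{-2}>0$ for $t\in(0,1)$ — hence so is their product, that is, $\tilde{\ell}$ strictly increases with $E$ on $(-2,0]$; together with $\tilde{\ell}(-2)=0$ (the level set degenerates to the elliptic point) this yields $\tilde{\ell}(E)<\tilde{\ell}(0)$ for all $E\in[-2,0)$. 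Combining the two branches gives $\ell(E)<\ell(0)$ for every admissible $E\neq 0$, which is the claim.

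The only genuine obstacle I anticipate is recognising that the $E$-dependence of $\mathcal{D}_{E}$ blocks the easy argument used for $E>0$, and then guessing the pair of substitutions that simultaneously fix the endpoints and expose the monotone dependence on $E$ in Eq.\,(\ref{eq:lEnormalised}); after that it is routine algebra. As an independent check, and a proof of the librational case more in the spirit of the paper, one may argue geometrically: each librational level curve is strictly convex — the graph $r(\theta;E)$ is concave on $\mathcal{D}_{E}$, which follows from $r''=-\big(2E\cos\theta+(\cos\theta+1)^{2}\big)/r^{3}$ together with $2E\cos\theta+(\cos\theta+1)^{2}\ge 1-(1+E)^{2}>0$ on $\mathcal{D}_{E}$ — and, since $r(\theta;E)<r(\theta;0)$ for $E<0$, it is enclosed in the convex region bounded by the separatrix; nested convex closed curves have ordered perimeters, whence $\ell(E)<\ell(0)$.
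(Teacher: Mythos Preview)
Your proof is correct. The circulational case ($E\ge 0$) is essentially the paper's argument verbatim, but your treatment of the librational range $-2\le E<0$ is genuinely different and, in fact, tighter than the paper's own. The paper argues geometrically: it foliates the first quadrant by rays $p=\lambda q$, compares the infinitesimal arc-length elements $d\ell_1(E_1)$ and $d\ell_1(E_2)$ along two nearby rays, and reduces the comparison to the monotonicity of an auxiliary function $F_\lambda(q)=q\sqrt{\lambda^2 q^2+\sin^2 q}\,/(\lambda^2 q+\sin q)$ on $[0,\pi]$; this last monotonicity is only asserted on the basis of numerical plots, so the paper's proof is not fully analytical. Your route---substituting $u=\cos\theta$ and then the affine $u=1-(E+2)t$ to freeze the endpoints---produces the closed form $\tilde\ell(E)=2\sqrt{a}\int_0^1\sqrt{\tfrac{1}{t(2-at)}+\tfrac{1}{2(1-t)}}\,\dd t$ with $a=E+2$, from which strict monotonicity is immediate by a pointwise comparison of integrands; no numerical step is needed. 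What the paper's approach buys is a picture of \emph{why} inner orbits are shorter (each radial slice contributes less arc), whereas your approach buys a complete analytic proof at the cost of that intuition. Your closing convexity remark---that $r(\theta;E)$ is concave on $\mathcal{D}_E$ because $2E\cos\theta+(\cos\theta+1)^2\ge 1-(1+E)^2>0$ there, hence the librational level sets are nested convex bodies with ordered perimeters---is a third, independent route, again fully rigorous and distinct from the paper's ray construction.
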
	    

\begin{proof}
	For $E \ge 0$, whatever the values of $\theta \in (-\pi,\pi)$,   the  inequality 
	\begin{align}
	\frac{\sin^2 \theta}{2(\cos \theta + E +1)} \le \frac{\sin^2 \theta}{2(\cos \theta  +1)},
	\end{align}
	hold true. 
	The inequality is preserved by integrating both expressions over $(-\pi,\pi)$ 
	from which we derive $\ell(E) \le  \ell(0)$. 
	
	Let us now consider the case $-2\leq E <0$. By exploiting the symmetries of the pendulum, the total length $\ell(E)$ is four times the length of the portion of orbit belonging to the first quadrant, say $\ell_1(E)$; our claim will thus be proved once we will prove $\ell_1(E_1)<\ell_1(E_2)$ for all $-2\leq E_1<E_2<0$. Let us thus fix a positive real $\lambda$, consider the straight line $p=\lambda q$ and its intersection with the level curves with energy $-2\leq E <0$. One can implicitly express the intersection point $q(E)$ as a function of $E$, by considering $\lambda$ a fixed parameter, that is $\lambda^2 q^2=2(E+1+\cos q)$, and thus by geometrical considerations, to conclude that $q(E)$ is an increasing function of $E$. Moreover $q(E)\rightarrow 0$ for $E\rightarrow -2^+$ and $q(E)\rightarrow \pi$ for $E\rightarrow 0^-$. Let us now consider two values of energy $E_1$ and $E_2$, such that $-2\leq E_1<E_2<0$, and let us call $(p_1,q_1)$ and $(p_2,q_2)$ the intersections points of the straight line with the two energy levels (see Fig.~\ref{fig:FigTeo}). 
	
	\begin{figure}
		\centering
		\includegraphics[width=0.7\linewidth]{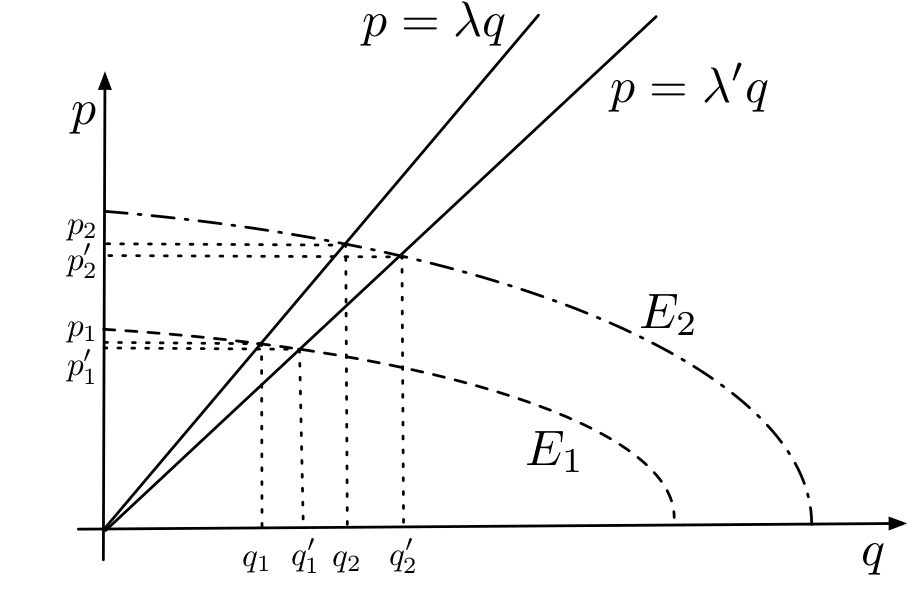}
		\caption{
			Orbits geometry used in the proof of Propostion~\ref{prop:lEpendulum} in the case $-2\leq E <0$. We reports two level curves associated to two values of the energy $-2\leq E_1<E_2<0$.
		}
		\label{fig:FigTeo}
	\end{figure}
	
	Let us now fix a second positive real $\lambda^\prime$ and consider again the intersections of the straight line $p=\lambda^\prime q$ with the two energy levels; let us denote them by $(p^\prime_1,q^\prime_1)$ and $(p^\prime_2,q^\prime_2)$. If $\lambda$ and $\lambda^\prime$ are infinitesimally close, say $\lambda^\prime=\lambda+d\lambda$, then (locally) the arc of level curves can be approximated by 
	\begin{align}\label{eq:delle1}
	d\ell_1(E_i)=\sqrt{1+\frac{\sin^2 q_i}{p^2_i}}dq_i,
	\end{align}
	where $dq_i=q_i-q_i^\prime$, the reason being that the level curves are smooth functions and thus we are approximating the length $\ell_1(E_i)$, \ie, the integral in Eq.\,(\ref{eq:lePend}), with the length of the segment tangent to the curve at $(p_i,q_i)$. Our goal is to prove that $d\ell_1(E_i)$ is an increasing function of the energy and from this to conclude that the same property holds true for the macroscopic arc length.
	
	To achieve our goal we have to express $dq_1$ in terms of $dq_2$. The first step is to relate $dp_1$, $dq_1$ and $d\lambda$. By exploiting the linear relation existing between $p_1$ and $q_1$ we get
	\begin{align}
	dp_1 = \lambda dq_1+q_1d\lambda ,
	\end{align} 
	and by recalling that $(p_1,q_1)$ belongs to the level curve we also get
	\begin{align}
	dp_1 = -\frac{\sin q_1}{p_1}  dq_1.
	\end{align} 
	Hence
	\begin{align}
	d\lambda = -\left(\frac{\sin q_1}{p_1} +\lambda\right) \frac{dq_1}{q_1}.
	\end{align} 
	
	Similar relations hold true for $dp_2$, $dq_2$ and $d\lambda$, we can then eliminate the dependence from $d\lambda$ and obtain
	\begin{align}
	\frac{dq_2}{q_2}\left(\frac{\sin q_2}{p_2} +\lambda\right) =  \left(\frac{\sin q_1}{p_1} +\lambda\right) \frac{dq_1}{q_1}.
	\end{align} 
	Recalling the expression~Eq.\,(\ref{eq:delle1}) for $d\ell_1(E_i)$, after some algebraic manipulations, we can eventually get
	\begin{align}
	\frac{d\ell_1(E_1)}{d\ell_1(E_2)}= \frac{\sqrt{q_1^2\lambda^2+\sin^2q_1}}{\lambda^2q_1+\sin q_1} q_1 \frac{\lambda^2q_2+\sin q_2}{\sqrt{q_2^2\lambda^2+\sin^2q_2}}\frac{1}{q_2}\equiv F_\lambda(q_1)\frac{1}{F_\lambda(q_2)},
	\end{align} 
	where the function $F_\lambda(q)$ has been defined by this last equality.
	
	Because of the above mentioned properties of $q_i$ as function of $E_i$, we can easily show that $F_\lambda\rightarrow 0$ for $q\rightarrow 0$ and $F_\lambda(\pi)=\pi/\lambda$.  By observing the numerical behaviour of the function $F_\lambda(q)$ (see Fig.~\ref{fig:FigTeo2}) one can conclude that it is monotone increasing for $q\in[0,\pi]$.
	
	\begin{figure}
		\centering
		\includegraphics[width=0.75\linewidth]{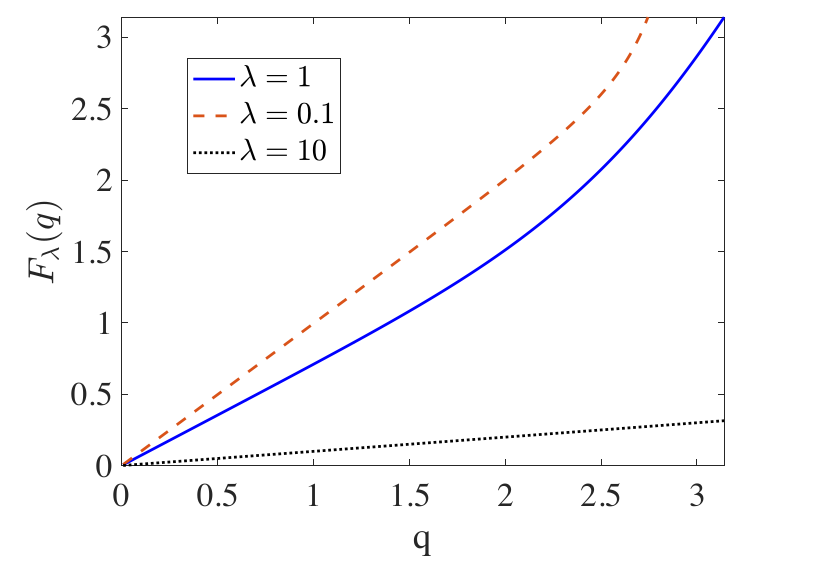}
		\caption{
			Graph of the function $F_\lambda(q)$ for three values of $\lambda$, $0.1$ (red dashed line), $1$ (blue solid line) and $10$ (black dotted line). One can appreciate the monotone increasing behaviour of the function in the domain $[0,\pi]$.
		}
		\label{fig:FigTeo2}
	\end{figure}
	By summarising the last result concerning $F_\lambda(q)$ together with the previous one stating that $q_1<q_2$ if $-2\leq E_1<E_2<0$, allows to conclude that $d\ell_1(E_1)<d\ell_1(E_2)$. Being this last result independent from the parameter $\lambda$ we can conclude that the length of the energy level $E_1$ is smaller than the one associated to the energy $E_2$.
\end{proof}

Establishing the loss of regularity of $\ell(E)$ at $E=0$ analytically is not a straightforward task, even if this property can be  easily appreciated by computing the graph of the function. Interestingly enough, 
there is also a loss of regularity of  the temporal LDs when crossing the separatrix as exemplified  on the right panel of  Fig.\,\ref{fig:leldpendulum}.  
Whilst the length of $\ell(E)$ reaches a global maximum at $E=0$, $\textrm{LD}(r_{0})$ is a local minimum on the line $(r_{0},\theta_{0}=0)$ when $r_{0}$ hits the separatrix at $r(\theta_{0})=2$. 
The temporal LDs have been computed over the time window $[0,20]$ (by approximating the flow with numerical solvers).  
We emphasize that the landscape presented in the left panel of Fig.\,\ref{fig:leldpendulum} depends solely on the energy, whilst the landscape presented in the right panel has required 
to compute the LDs over a specific direction (here by freezing the $\theta$ coordinate). The graph of $\ell(E)$ suggests that  the separatrix of the pendulum can be delineated and reconstructed semi-analytically, without the knowledge of the flow (or any approximation of it). 
In fact, each point $(r,\theta) \in \mathcal{D}$ defines a specific energy level $E$ for which $\ell(E)$ follows. 
Let us stress again that the
derivation of $\ell(E)$ does not involve the time parameter whatsoever.
The Fig.\,\ref{fig:MapsPendulum} presents the dynamical maps obtained on $\mathcal{D}=[-2.5,2.5]\times[-\pi,\pi]$, partitioned into a $500 \times 500$ Cartesian mesh of coordinates $(r,\theta)$. Let us observe
that this approach is redundant and used only to compare with the temporal
LD, indeed we could have used points located onto a line transverse to the flow
and then associate the computed value of the geometrical LD to all the points
lying on the same energy level.
From the computation of the geometrical LD on the nodes of this grid, we extracted also the
norm of the directional derivatives\footnote{
	Note that $B(r,\theta)$ is computed by using the 
	same mesh of initial conditions that has been used to compute $\ell(E)$ on each point $(r,\theta)$ of the grid.
	In particular,  to compute the norm of the directional derivatives, we dot not resample a new and more resolved mesh of initial conditions. 
	It is thus implicitly assumed that the resolution of the mesh is fine enough.   
}  with respect to the coordinates $(r,\theta)$
\begin{align}
B(r,\theta) = 
\norm{
	(\partial_{r}\ell(E), 
	\partial_{\theta}\ell(E))
}_{2}.
\end{align} 
Both dynamical maps succeed in revealing the separatrix.    

\begin{figure}
	\centering
	\includegraphics[width=1\linewidth]{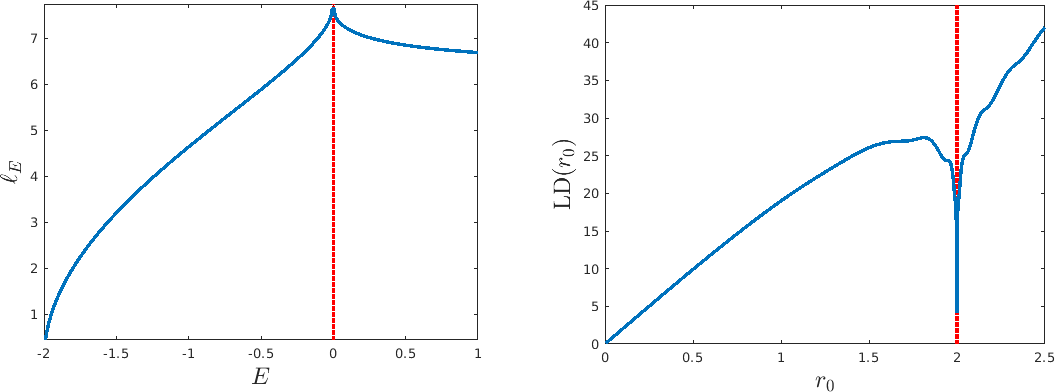}
	\caption{
		Landscapes of the geometrical and temporal LD on the pendulum model. 		
		(Left) The function $\ell$ as a function of $E$ spanning the librational and circulational domains.
		(Right) Landscape of the temporal LDs computed at $t=20$  
		along the line of initial condition 
		$(r_{0},\theta_{0}=0)$ where $r_{0}$ spans the librational and circulational domains. 
		In each case, the red dashed vertical lines indicate the location of the separatrix.   
	}
	\label{fig:leldpendulum}
\end{figure}

\begin{figure}
	\centering
	\includegraphics[width=1\linewidth]{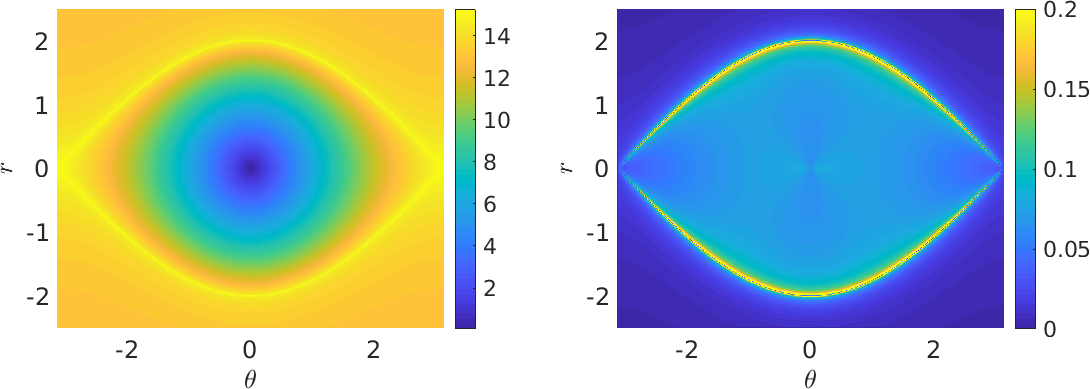}
	\caption{
		(Left)
		Dynamical map of the pendulum model obtained by computing $\ell(E)$ and 
		(Right)
		the norm of its gradient $B$ with respect to $(r,\theta)$.
	}
	\label{fig:MapsPendulum}
\end{figure}

\subsection{Rate of divergence of $\vert \dd \ell(E)/\dd E \vert$}\label{sub:dlede}   
In this subsection, we take advantage of the geometrical LD in order to  quantify the rate at which $\vert \dd \ell(E)/\dd E\vert$ becomes singular when $E \to E_{\textrm{sx}}=0$. 
We then connect and discuss the estimate to temporal LDs obtained under linearisation of the flow. \\

Even if we are interested in energy values close to $E=E_{\textrm{sx}}=0$, let us observe the following fact when
$E$ tends to the energy associated to the elliptic point,   
$E_{\textrm{ell.}}=-2$.

\begin{prop}\label{prop:SingElliptic}
	Let us consider $E \to -2^{+}$. Then we have 
	$\vert \dd \ell(E)/\dd E \vert \to + \infty $  
	at a rate $\mathcal{O}(1/\sqrt{E })$.
\end{prop}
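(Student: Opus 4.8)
The plan is to exploit that, as $E\to E_{\textrm{ell.}}=-2$, the level curve $\mathcal{H}(r,\theta)=E$ collapses onto the elliptic point and, after a suitable rescaling, is governed by the circle of the harmonic approximation. Writing $\mathcal{H}(r,\theta)=\tfrac{r^{2}}{2}+(1-\cos\theta)-2$ and using $1-\cos\theta=\tfrac{\theta^{2}}{2}+\mathcal{O}(\theta^{4})$, the level set $\mathcal{H}=E$ is, to leading order, the circle $r^{2}+\theta^{2}=2(E+2)$ of radius $\sqrt{2(E+2)}$ and length $2\pi\sqrt{2(E+2)}$. I therefore expect
\begin{align}
\ell(E)=2\pi\sqrt{2(E+2)}\,\bigl(1+\mathcal{O}(E+2)\bigr),\qquad
\Bigl\vert\frac{\dd \ell(E)}{\dd E}\Bigr\vert\sim \frac{\pi\sqrt{2}}{\sqrt{E+2}}\qquad (E\to -2^{+}),
\end{align}
which both diverges and is $\mathcal{O}\bigl(1/\sqrt{E+2}\bigr)=\mathcal{O}\bigl(1/\sqrt{E-E_{\textrm{ell.}}}\bigr)$, as claimed (note $\ell$ is increasing near $E=-2$, so the absolute value is immaterial).

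To turn this into a proof I would start from Eq.\,(\ref{eq:lePend}) with $\ell(E)=2\tilde{\ell}(E)$ and, for $-2\le E<0$, $\mathcal{D}_{E}=(-\theta_{*}(E),\theta_{*}(E))$, $\theta_{*}(E)=\arccos(-E-1)$, so that $1-\cos\theta_{*}(E)=E+2$. I perform the change of variable $\theta=\theta_{*}(E)\sin\psi$, $\psi\in[-\tfrac{\pi}{2},\tfrac{\pi}{2}]$; this normalises the domain of integration and, as a bonus, cancels the inverse square-root turning-point singularity of the integrand at the endpoints. Writing $1-\cos u=\tfrac{u^{2}}{2}\,h(u^{2})$ and $\bigl(\tfrac{\sin u}{u}\bigr)^{2}=k(u^{2})$ with $h,k$ entire and $h(0)=k(0)=1$, and using that $\theta_{*}^{2}$ is — by the implicit function theorem applied to $\tfrac{\theta_{*}^{2}}{2}h(\theta_{*}^{2})=E+2$ — a smooth function of $E$ near $-2$ vanishing at $E=-2$, one obtains
\begin{align}
\tilde{\ell}(E)=\theta_{*}(E)\int_{-\pi/2}^{\pi/2}\Psi(\psi,E)\,\dd\psi=:\theta_{*}(E)\,\Lambda(E),
\end{align}
where $\Psi$ extends to a function that is $C^{1}$ (indeed $C^{\infty}$) and uniformly bounded on $[-\tfrac{\pi}{2},\tfrac{\pi}{2}]\times[-2,-2+\delta)$ with $\Psi(\psi,-2)\equiv 1$; hence $\Lambda$ is $C^{1}$ near $E=-2$ with $\Lambda(-2)=\pi$.

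Differentiating, and using $\theta_{*}'(E)=\bigl[(E+2)(2-(E+2))\bigr]^{-1/2}\sim (2(E+2))^{-1/2}$ together with $\theta_{*}(E)=\mathcal{O}(\sqrt{E+2})\to 0$ and $\Lambda,\Lambda'$ bounded, the term $\theta_{*}'(E)\Lambda(E)$ dominates in
\begin{align}
\frac{\dd \ell(E)}{\dd E}=2\theta_{*}'(E)\,\Lambda(E)+2\theta_{*}(E)\,\Lambda'(E),
\end{align}
so that $\dd \ell(E)/\dd E\sim 2\pi/\sqrt{2(E+2)}=\pi\sqrt{2}/\sqrt{E+2}\to +\infty$ as $E\to-2^{+}$, which is the asserted rate $\mathcal{O}(1/\sqrt{E+2})$.

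The only delicate point is the regularity claim on $\Psi$ across the degenerate value $E=-2$, where $\theta_{*}(E)\to 0$: after the substitution, $\Psi^{2}$ is built from quotients of the type $\sin^{2}(\theta_{*}\sin\psi)/(\theta_{*}\sin\psi)^{2}$ and $\bigl[(1-\cos\theta_{*})-(1-\cos(\theta_{*}\sin\psi))\bigr]\big/\bigl(\tfrac{1}{2}\theta_{*}^{2}\cos^{2}\psi\bigr)$, whose analyticity (inherited from that of $(1-\cos z)/z^{2}$ and $(\sin z/z)^{2}$) must be combined with the cancellation of the $1/\cos\psi$ behaviour at $\psi=\pm\tfrac{\pi}{2}$ to secure the uniform bounds legitimating dominated convergence and differentiation under the integral sign. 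An equivalent route that avoids closed forms: rescale $r=\sqrt{E+2}\,\rho$, $\theta=\sqrt{E+2}\,\vartheta$, check that the rescaled level curves form a $C^{1}$ family in the parameter $E+2$ (including the value $0$, since $\bigl(1-\cos(\sqrt{E+2}\,\vartheta)\bigr)/(E+2)=\tfrac{\vartheta^2}{2}h\!\left((E+2)\vartheta^{2}\right)$ is smooth there) converging to the circle $\rho^{2}+\vartheta^{2}=2$, and use $\ell(E)=\sqrt{E+2}\,\vert\Gamma_{E}\vert$ with $\vert\Gamma_{E}\vert\to 2\pi\sqrt{2}$. Everything else is routine bookkeeping.
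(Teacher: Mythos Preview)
Your argument is correct, and it is strictly more complete than the paper's. The paper's proof is precisely your opening heuristic: it replaces $\mathcal{H}$ by its quadratic part $r^{2}/2+\theta^{2}/2-2$, observes that the level curves are circles of radius $\sqrt{2(E+2)}$, writes $\ell(E)=2\pi\sqrt{2(E+2)}$, and differentiates to get $\dd\ell/\dd E=2\pi/\sqrt{2(E+2)}$. No justification is offered for why the linearised length and its derivative coincide with those of the true pendulum to leading order.

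Your substitution $\theta=\theta_{*}(E)\sin\psi$, the factorisation $\tilde{\ell}(E)=\theta_{*}(E)\Lambda(E)$ with $\Lambda(-2)=\pi$, and the identification of the dominant term $2\theta_{*}'(E)\Lambda(E)$ supply exactly the missing control, turning the heuristic into an actual asymptotic statement. The ``delicate point'' you flag (smoothness of $\Psi$ across $E=-2$) is genuine but routine, and your sketch via the analyticity of $(1-\cos z)/z^{2}$ and $(\sin z/z)^{2}$ is the right way to close it. So: same underlying idea, but you have done the work the paper leaves implicit.
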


\begin{proof}
	The linearised Hamiltonian of Eq.\,(\ref{eq:pendulum}) near the origin reads
	\begin{align}
	\mathcal{H} = \frac{r^2}{2} + \frac{\theta^{2}}{2}-2.
	\end{align}
	The 
	flow generates solutions which are circles, each circle is labelled by the value of the energy. The radius of the circles are   $\textrm{r}=\sqrt{2(E+2)}$. Therefore, $\ell(E)=2\pi \textrm{r} =2 \pi \sqrt{2(E+2)}$ 
	and
	$\dd \ell(E)/\dd E = 2\pi / \sqrt{2(E+2)}$. 
\end{proof}	

Prop.\,\ref{prop:SingElliptic} highlights that the geometrical LDs become singular when approaching the elliptic point. 
It turns out that the scaling of $\vert \dd \ell(E)/\dd E \vert$ as $\mathcal{O}(1/\sqrt{E})$ just found in the vicinity of the elliptic point is also valid locally near the saddle point and more generally when $E \to 0$. 

\begin{prop}\label{prop:SingHyperbolic}
	Let us consider the harmonic repulsor $\mathcal{H}=(r^2- \theta^{2})/2$.
	Then we have $\dd \ell(E)/\dd E  = \mathcal{O}(1/\sqrt{\vert E\vert})$.
\end{prop}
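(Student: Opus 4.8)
The plan is to turn the statement into an explicit one–dimensional integral, differentiate it in $E$, and then expose the singular prefactor by rescaling the integration variable, treating $E\to 0^{+}$ and recovering $E\to 0^{-}$ from a symmetry.

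First I would exploit the symmetry of the harmonic repulsor: the linear involution $(r,\theta)\mapsto(\theta,r)$ is an isometry of the plane that maps $\{\mathcal{H}=E\}$ onto $\{\mathcal{H}=-E\}$, so on a domain symmetric under this involution one has $\ell(E)=\ell(-E)$, and it suffices to treat $E>0$ tending to $0$. For $E>0$ the branch $r(\theta;E)=\sqrt{2E+\theta^{2}}$ is a smooth graph; since the full hyperbola has infinite length, the relevant object for this local model is $\ell(E)$ computed over a fixed bounded window around the saddle, say $|\theta|\le b$ (the precise shape of the window is immaterial for the scaling). Formula (\ref{eq:lE}) then gives, up to the harmless multiplicative constant accounting for the symmetric branches,
\begin{align}
\tilde{\ell}(E)=\int_{-b}^{b}\sqrt{1+\frac{\theta^{2}}{2E+\theta^{2}}}\;\dd\theta .
\end{align}

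Second, for $E$ in a compact subset of $(0,\infty)$ the integrand and its $E$–derivative are bounded and continuous on $[-b,b]$, so differentiation under the integral sign (Leibniz) is legitimate and yields
\begin{align}\label{eq:dldE-repulsor}
\frac{\dd\tilde{\ell}}{\dd E}(E)=-\int_{-b}^{b}\frac{\theta^{2}\,\dd\theta}{(2E+\theta^{2})^{3/2}\sqrt{2E+2\theta^{2}}} .
\end{align}
Using $\sqrt{2E+2\theta^{2}}\ge\sqrt{2}\,|\theta|$ and an elementary antiderivative,
\begin{align}
\Big|\frac{\dd\tilde{\ell}}{\dd E}(E)\Big|\le\frac{1}{\sqrt{2}}\int_{-b}^{b}\frac{|\theta|\,\dd\theta}{(2E+\theta^{2})^{3/2}}=\sqrt{2}\Big(\frac{1}{\sqrt{2E}}-\frac{1}{\sqrt{2E+b^{2}}}\Big)\le\frac{1}{\sqrt{E}} ,
\end{align}
which already establishes $\dd\ell/\dd E=\mathcal{O}(1/\sqrt{|E|})$ as $E\to 0$. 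To see that this rate is sharp, hence the same power law as near the elliptic point in Proposition \ref{prop:SingElliptic}, I would substitute $\theta=\sqrt{2E}\,s$ in (\ref{eq:dldE-repulsor}), turning it into
\begin{align}
\frac{\dd\tilde{\ell}}{\dd E}(E)=-\frac{1}{\sqrt{2E}}\int_{-b/\sqrt{2E}}^{\,b/\sqrt{2E}}\frac{s^{2}\,\dd s}{(1+s^{2})^{3/2}\sqrt{1+2s^{2}}} ,
\end{align}
and note that the integrand is nonnegative with $\mathcal{O}(s^{-2})$ decay at infinity, so by dominated convergence the integral tends, as $E\to 0^{+}$, to the finite constant $C=\int_{\mathbb{R}}s^{2}(1+s^{2})^{-3/2}(1+2s^{2})^{-1/2}\,\dd s>0$; therefore $\dd\tilde{\ell}/\dd E\sim-C/\sqrt{2E}$.

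Neither step is a genuine obstruction. The two points requiring care are fixing the bounded integration window (without it $\ell(E)$ is not even finite for this non-compact model) and justifying the passage to the limit inside the rescaled integral, which the explicit integrable majorant above handles. The only remaining bookkeeping — tracking how the several branches of the hyperbola within the window contribute to $\ell(E)$ — affects an overall positive constant but not the $1/\sqrt{|E|}$ scaling, which is what I expect to be the most nuisance-prone, rather than difficult, part.
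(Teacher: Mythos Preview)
Your argument is correct, but the paper's own proof is considerably shorter and proceeds along a different route. The paper parametrises the branch of the hyperbola by hyperbolic functions, $(\theta(t),r(t))=(\sqrt{2E}\sinh t,\sqrt{2E}\cosh t)$, over a \emph{fixed} parameter interval $t\in[0,t_{\star}]$. Because the whole curve scales homogeneously with the factor $\sqrt{2E}$, so does its arc length: one obtains the exact closed form $\ell(E)=\sqrt{2E}\,\kappa(t_{\star})$ with $\kappa(t_{\star})=\int_{0}^{t_{\star}}\sqrt{\sinh^{2}t+\cosh^{2}t}\,\dd t$, and the $1/\sqrt{E}$ scaling of $\dd\ell/\dd E$ follows by a one-line differentiation.

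Your route instead fixes a coordinate window $|\theta|\le b$, writes $\tilde{\ell}(E)$ via the graph formula~(\ref{eq:lE}), differentiates under the integral sign, and then extracts the singular prefactor by rescaling $\theta=\sqrt{2E}\,s$ and invoking dominated convergence. This is more work for the same conclusion, but it does buy two things: it stays within the graph-based framework used elsewhere in the paper, and the fixed spatial window is arguably the more natural notion of ``length near the saddle'' (the paper's fixed-$t_{\star}$ cut corresponds to a region in the $(r,\theta)$ plane that shrinks with $E$). Conversely, the paper's parametrisation exploits the self-similarity of the level sets of the harmonic repulsor under dilations, which is what yields an exact identity rather than an asymptotic.
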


\begin{proof}
	Given the symmetries, we  focus on $r \ge 0, \theta \ge 0, E > 0$. For a given $E$, we interpret the finite branch of the hyperbola stemming from $(0,\sqrt{2E})$ 
	as the parametric curve $t \mapsto \big(\theta(t),r(t)\big)=(\sqrt{2E}\sinh t,\sqrt{2E}\cosh t)$, $t \in [0,t_{\star}]$. The length of this curve is given by 
	\begin{align}
	\ell(E)=\int_{0}^{t_{\star}} 
	\sqrt{\dot{\theta}^{2}(t)+\dot{r}^{2}(t)}\,\dd t
	=
	\sqrt{2E}\int_{0}^{t_{\star}}\sqrt{\sinh^2(t) + \cosh^{2}(t)}\, \dd t.
	\end{align}
	It follows that
	$\ell(E)= \sqrt{2E} \kappa(t_{\star})$
	and thus 
	\begin{align}
	\frac{\dd \ell(E)}{\dd E}= \frac{1}{\sqrt{E}}\frac{1}{\sqrt{2}}\kappa(t_{\star}).
	\end{align}     
\end{proof}

\begin{prop}\label{prop:SingHyp}
	Let us consider $E \to E_{\label{prop:SingHyp}\textrm{sx}}=0$. Then we have 
	$\vert \dd \ell(E)/\dd E \vert \to + \infty $  
	at a rate $\mathcal{O}(1/\sqrt{\vert E \vert})$.
\end{prop}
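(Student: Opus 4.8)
The plan is to localise the singularity. For $E$ near $E_{\textrm{sx}}=0$ the level curve $\mathcal H=E$ stays bounded away from every critical point except the hyperbolic one $(0,\pi)$, and the blow–up of $\dd\ell/\dd E$ will come entirely from an arbitrarily small neighbourhood of that point, where the pendulum Hamiltonian coincides, up to quartic terms in the angle, with the harmonic repulsor of Prop.\,\ref{prop:SingHyperbolic}. Concretely I would fix a small $\delta\in(0,\pi)$ and write $\ell(E)=\ell_{\textrm{out}}(E)+\ell_{\textrm{in}}(E)$, with $\ell_{\textrm{in}}(E)$ the length of the part of $\mathcal H=E$ lying in the strip $\{|\theta-\pi|\le\delta\}$ on the cylinder and $\ell_{\textrm{out}}(E)$ the rest. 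On the complement, as soon as $|E|<1-\cos\delta$ one has $E+\cos\theta+1\ge 1-\cos\delta-|E|>0$, so the integrand of Eq.\,(\ref{eq:lePend}) and all its $E$-derivatives are uniformly bounded there, and for $E<0$ the turning points lie inside the strip; hence $\ell_{\textrm{out}}$ is $C^{1}$ near $E=0$ with bounded derivative, and only $\ell_{\textrm{in}}$ remains to be studied.

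For the local piece I would set $\psi=\theta-\pi$ so that $\mathcal H=\tfrac12(r^{2}-\psi^{2})+R(\psi)$ with $R(\psi)=\cos\psi-1+\tfrac12\psi^{2}=\mathcal O(\psi^{4})$. For $E>0$ the curve in the strip is a pair of graphs $r=\pm\sqrt{2(E+1-\cos\psi)}$, and the symmetries $\psi\mapsto-\psi$, $r\mapsto-r$ give $\ell_{\textrm{in}}(E)=4\int_{0}^{\delta}\sqrt{1+\sin^{2}\psi/\big(2(E+1-\cos\psi)\big)}\,\dd\psi$. For $E<0$ the curve in the strip consists of ``caps'' hanging from the turning points $\psi=\pm\arccos(1+E)$; near such a point I would parametrise by $r$ instead of $\psi$, writing $\psi=\pm\arccos(1+E-r^{2}/2)$, which removes the integrable endpoint singularity and yields an analogous expression in which $2E$ is replaced by $2|E|$ and the integrand is smooth in $r$ on a fixed-shape interval.

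The rate then follows from a scaling argument. Working at a fixed $E\neq0$, so that differentiating under the integral sign is legitimate, one gets $\dd\ell_{\textrm{in}}/\dd E=-\int_{0}^{\delta}\sin^{2}\psi\,\big(E+1-\cos\psi\big)^{-2}\big(1+\sin^{2}\psi/(2(E+1-\cos\psi))\big)^{-1/2}\,\dd\psi$ for $E>0$. Substituting $\psi=\sqrt{E}\,u$ and using $1-\cos\psi=\tfrac12\psi^{2}+\mathcal O(\psi^{4})$, $\sin^{2}\psi=\psi^{2}+\mathcal O(\psi^{4})$, the integral rescales into $\dd\ell_{\textrm{in}}/\dd E=-\tfrac{1}{\sqrt{E}}\int_{0}^{\delta/\sqrt{E}}\Phi(u;E)\,\dd u$, where $\Phi(u;E)\to u^{2}\big(1+u^{2}/2\big)^{-2}\big(1+u^{2}/(2+u^{2})\big)^{-1/2}$ pointwise as $E\to0^{+}$. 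For $\delta$ small enough $\Phi(\cdot;E)$ is dominated, uniformly in $E$, by a fixed integrable function on $[0,\infty)$, so dominated convergence gives $\int_{0}^{\delta/\sqrt{E}}\Phi(u;E)\,\dd u\to C:=\int_{0}^{\infty}u^{2}\big(1+u^{2}/2\big)^{-2}\big(1+u^{2}/(2+u^{2})\big)^{-1/2}\,\dd u\in(0,\infty)$, whence $\dd\ell_{\textrm{in}}/\dd E=-(C+o(1))/\sqrt{E}$ as $E\to0^{+}$. The same substitution ($\psi=\sqrt{|E|}\,u$) applied to the $r$-parametrised integral gives $\dd\ell_{\textrm{in}}/\dd E=(C'+o(1))/\sqrt{|E|}$ with $C'\in(0,\infty)$ as $E\to0^{-}$. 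Combining with the first paragraph, $|\dd\ell/\dd E|=|\dd\ell_{\textrm{in}}/\dd E|+\mathcal O(1)$ both diverges and is of exact order $1/\sqrt{|E|}$, which is the claim and is consistent with the harmonic-repulsor estimate of Prop.\,\ref{prop:SingHyperbolic}.

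The main obstacle is the control of the non-quadratic remainder $R(\psi)$: I must show that after the rescaling it perturbs the integrand only by quantities that vanish pointwise and remain dominated, so that the limiting constant $C$ is exactly the one of the quadratic model; this is what makes the power law $1/\sqrt{|E|}$ insensitive to the higher-order structure of the potential at the saddle. The subsidiary technical points are the interchange of differentiation and integration at the singular parameter value (handled by differentiating at fixed $E\neq0$ and only then letting $E\to0$) and the change of parametrisation that regularises the moving turning point in the librational regime. With the remainder tamed, one recovers near the saddle the same $\mathcal O(1/\sqrt{|E|})$ behaviour found near the elliptic point in Prop.\,\ref{prop:SingElliptic}.
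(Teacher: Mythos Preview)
Your approach is sound and is a genuine upgrade over what the paper does. The paper's ``proof'' of this proposition is explicitly labelled \emph{semi-analytical}: the authors simply evaluate $\ell(E)$ numerically on both sides of $E=0$, plot $|\dd\ell/\dd E|$ against $1/\sqrt{|E|}$ on log–log axes (their Fig.~6), and observe that the slopes match. No localisation, no scaling substitution, no asymptotic analysis is carried out; the $\mathcal O(1/\sqrt{|E|})$ rate is taken as a conjecture motivated by Prop.~\ref{prop:SingHyperbolic} and then corroborated by computation.

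By contrast, you split $\ell=\ell_{\textrm{out}}+\ell_{\textrm{in}}$, show the outer piece is $C^{1}$, and in the inner strip perform the substitution $\psi=\sqrt{|E|}\,u$ together with dominated convergence to extract the exact order and a finite non-zero limiting constant. This actually \emph{proves} the proposition analytically, and moreover explains why the law is governed by the quadratic part of the potential at the saddle (your remark on the remainder $R(\psi)$). The paper's numerical route is quicker and model-agnostic (it is reused verbatim for Duffing and the fish-tail), but it does not yield a proof; your route is specific to the problem at hand but delivers a rigorous asymptotic. The technical points you flag—controlling $R(\psi)$ uniformly after rescaling, and switching to the $r$-parametrisation on the librational side to regularise the moving turning point—are exactly the places where work remains, and your identification of them is accurate.
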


\begin{proof}(Semi-analytical)
	We numerically estimate $\ell(E)$ when $E \to 0$ both  from the librational ($E \to 0^{-}$) and circulational ($E \to 0^{+}$) domains.  Fig.\,\ref{fig:dledepend} compares the predicted estimates with the numerical estimations of 
	$\vert \dd \ell(E)/\dd E \vert$ and corroborate the claim. 
\end{proof}

\begin{figure}
	\centering
	\includegraphics[width=1\linewidth]{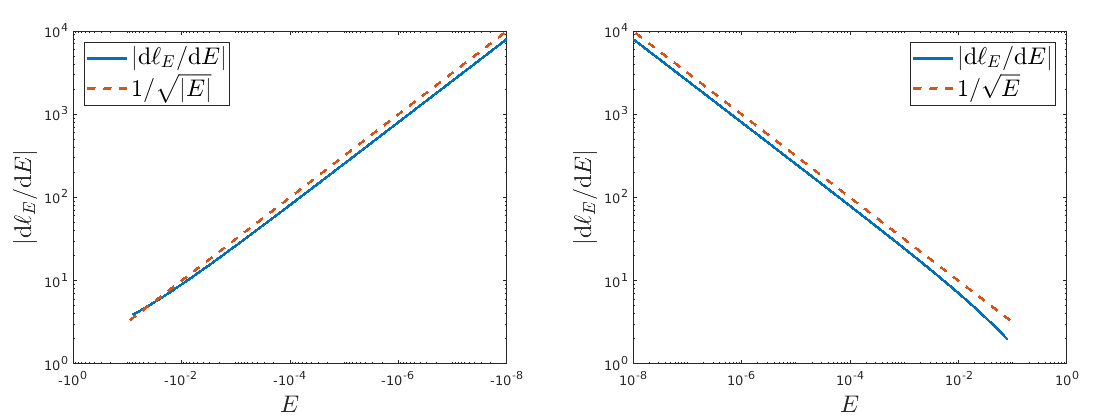}
	\caption{
		The divergence of $\dd \ell(E)/\dd E$ scales as $1/\sqrt{\vert E\vert}$
		when approaching the separatrix from 
		the librational (left) and circulational (right) domains. 
	}
	\label{fig:dledepend}
\end{figure}

We might wonder if  these power-law divergences are encapsulated into the temporal LDs.  
The following propositions show that a  
compatible estimate can be recovered near the elliptic and saddle points. 
\begin{prop}
	Consider the harmonic oscillator $\mathcal{H}=(r^{2}+\theta^{2})/2$. For $t>0$ fixed, we  have 
	\begin{align} 
	\frac{\dd}{\dd E}
	\rm{LD}\big((r,
	\theta);t\big) = \mathcal{O}(1/\sqrt{E}). 
	\end{align}       
\end{prop}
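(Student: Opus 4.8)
The plan is to exploit the fact that for the harmonic oscillator the integrand appearing in the temporal LD is constant along each orbit, so that the integral in Eq.\,(\ref{eq:LDlength}) collapses to an elementary expression in $E$.

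First I would write down the equations of motion generated by $\mathcal{H}=(r^{2}+\theta^{2})/2$, namely $\dot{\theta}=\partial_{r}\mathcal{H}=r$ and $\dot{r}=-\partial_{\theta}\mathcal{H}=-\theta$. Then I would compute the speed along an arbitrary trajectory: $\norm{\dot{x}(s)}_{2}^{2}=\dot{r}^{2}(s)+\dot{\theta}^{2}(s)=\theta^{2}(s)+r^{2}(s)=2\mathcal{H}\big(r(s),\theta(s)\big)$. Since $\mathcal{H}$ is a first integral of the flow, the right-hand side equals $2E$ for every $s$, where $E=\mathcal{H}(r,\theta)$ is the energy of the initial condition. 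In particular the speed depends neither on $s$ nor on which point of the energy level is taken as Cauchy datum.

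Substituting into Eq.\,(\ref{eq:LDlength}) then gives $\textrm{LD}\big((r,\theta);t\big)=\int_{-t}^{t}\sqrt{2E}\,\dd s=2t\sqrt{2E}$. This shows that, for $t$ fixed, the temporal LD depends on the initial condition only through $E$, so that the derivative $\dd/\dd E$ appearing in the statement is well defined; differentiating yields $\dd/\dd E\,\textrm{LD}\big((r,\theta);t\big)=t\sqrt{2}/\sqrt{E}$, which is the claimed $\mathcal{O}(1/\sqrt{E})$ behaviour as $E\to 0^{+}$.

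There is no genuine obstacle here: the only subtlety worth spelling out is the meaning of $\dd/\dd E$, which is legitimate precisely because the conserved speed of the harmonic oscillator makes $\textrm{LD}$ a bona fide function of $E$ alone. This also explains, from the temporal side, why the same $1/\sqrt{E}$ rate found in Prop.\,\ref{prop:SingElliptic} and Prop.\,\ref{prop:SingHyp} for the geometrical descriptor resurfaces for the temporal one.
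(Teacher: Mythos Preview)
Your proof is correct and follows essentially the same route as the paper: compute the temporal LD explicitly by using that the speed $\norm{\dot{x}}_{2}=\sqrt{r^{2}+\theta^{2}}=\sqrt{2E}$ is conserved along the flow, obtain $\textrm{LD}=2t\sqrt{2E}$, and differentiate. The paper's version is terser (it cites \cite{amMa13} for the formula and writes $2t\sqrt{E}$, dropping a harmless $\sqrt{2}$), but the argument is the same; your extra sentence justifying why $\dd/\dd E$ is well defined is a welcome clarification.
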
	

\begin{proof}
	Following \cite{amMa13} (confer section $2.2.1$), given $(r,\theta)$ and $t>0$, straightforward computations give
	\begin{align}\label{eq:LDell}
	\rm{LD}\big((r,\theta);t\big)=2t\sqrt{r^{2}+\theta^{2}}=2t\sqrt{E}, 
	\end{align} 
	from which the announced equality follows. 
\end{proof}	

\begin{prop}
	Consider the harmonic repulsor  $\mathcal{H}=(r^{2}-\theta^{2})/2$. For $t>0$ fixed, we  have 
	\begin{align} 
	\frac{\dd}{\dd E}
	\rm{LD}\big((r,
	\theta);t\big) = \mathcal{O}(1/\sqrt{E}). 
	\end{align}       
\end{prop}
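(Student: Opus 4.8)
The plan is to mimic the structure of the harmonic oscillator argument but to replace the explicit evaluation of the integral by a scaling (homogeneity) argument, which is available here because the repulsor vector field $f(r,\theta)=(\theta,r)$ is \emph{linear}, hence positively homogeneous of degree one. First I would note that if $f(x)=Ax$ for a constant matrix $A$, the flow is $\phi^{s}(x)=e^{sA}x$, so that $\phi^{s}(\lambda x_{0})=\lambda\,\phi^{s}(x_{0})$ and $\dot x(s;\lambda x_{0})=\lambda\,\dot x(s;x_{0})$ for every $\lambda>0$. Substituting into Eq.\,(\ref{eq:LDlength}) and pulling the positive scalar $\lambda$ out of the Euclidean norm yields the scaling law
\begin{align}\label{eq:LDhomog}
\mathrm{LD}(\lambda x_{0};t)=\lambda\,\mathrm{LD}(x_{0};t),\qquad \lambda>0,
\end{align}
for any fixed $t>0$; the right-hand side is finite since $s\mapsto\norm{\dot x(s;x_{0})}_{2}$ is continuous on the compact interval $[-t,t]$.

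Next I would make ``$\dd/\dd E$'' precise, exactly as in the harmonic oscillator case: fix a reference point $x_{\ast}=(r_{\ast},\theta_{\ast})$ with $\mathcal{H}(x_{\ast})=\varepsilon$, $\varepsilon\in\{-1,+1\}$, and let the initial condition move along the ray $x_{0}=\lambda x_{\ast}$, $\lambda>0$, which is transverse to the level curves of $\mathcal{H}$ off the origin. Along this ray $E=\mathcal{H}(\lambda x_{\ast})=\lambda^{2}\mathcal{H}(x_{\ast})=\varepsilon\lambda^{2}$, hence $\lambda=\sqrt{\vert E\vert}$, and the scaling law (\ref{eq:LDhomog}) gives the identity $\mathrm{LD}\big((r,\theta);t\big)=\sqrt{\vert E\vert}\,\mathrm{LD}(x_{\ast};t)$. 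Differentiating in $E$ then produces
\begin{align}
\frac{\dd}{\dd E}\mathrm{LD}\big((r,\theta);t\big)=\frac{\mathrm{LD}(x_{\ast};t)}{2\sqrt{\vert E\vert}}=\mathcal{O}\!\left(1/\sqrt{\vert E\vert}\right),
\end{align}
valid as $E\to 0$ and, in fact, for every $E\neq 0$, with the same power law obtained from the $E>0$ and $E<0$ sectors (the only change being $\varepsilon=+1$ versus $\varepsilon=-1$). Since $\mathrm{LD}(x_{\ast};t)$ stays bounded as $x_{\ast}$ ranges over a compact arc of a fixed level set, the implied constant is uniform in the chosen transverse direction.

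An alternative, closer to \cite{amMa13}, is to avoid the homogeneity shortcut altogether: passing to characteristic coordinates $\xi=r+\theta$, $\eta=\theta-r$ diagonalises the flow ($\xi(s)=\xi_{0}e^{s}$, $\eta(s)=\eta_{0}e^{-s}$) and turns the integrand into $\norm{\dot x(s)}_{2}=\big(\tfrac12(\xi_{0}^{2}e^{2s}+\eta_{0}^{2}e^{-2s})\big)^{1/2}$ with $\xi_{0}\eta_{0}=-2E$; one then differentiates the resulting integral under the integral sign and extracts the $\sqrt{\vert E\vert}$ prefactor. In either route the only genuinely delicate point is bookkeeping rather than analysis, namely attaching an unambiguous meaning to ``$\dd/\dd E$'' when $\mathrm{LD}$ is a priori a function of the full phase point and not of $E$ alone — the homogeneity argument resolves this cleanly, showing that once restricted to any transversal, $\mathrm{LD}$ is \emph{exactly} proportional to $\sqrt{\vert E\vert}$, so that the $1/\sqrt{\vert E\vert}$ divergence of its $E$-derivative is an identity and not merely an asymptotic estimate.
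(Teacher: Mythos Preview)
Your argument is correct and reaches the same factorisation $\mathrm{LD}=\sqrt{|E|}\cdot(\text{constant in }E)$ that the paper obtains. The paper's own proof is more hands-on: it writes the flow explicitly, parametrises the initial data as $(q_{0},p_{0})=(\sqrt{2E}\cosh t_{0},\sqrt{2E}\sinh t_{0})$ with $t_{0}$ fixed, and reads off $\mathrm{LD}=\sqrt{2E}\,\kappa(t)$ from the resulting integral. Your homogeneity route is the abstract version of exactly this computation---fixing $t_{0}$ in the paper's hyperbolic parametrisation is the same as fixing the ray direction $x_{\ast}$ in yours---but it has the merits of never needing the explicit flow, of making the origin of the $\sqrt{E}$ scaling transparent (degree-one homogeneity of a linear vector field combined with degree-two homogeneity of $\mathcal{H}$), and of treating both signs of $E$ uniformly, which the paper's first-quadrant parametrisation does not.
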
	

\begin{proof}
	Given the Cauchy initial condition $(r_{0},\theta_{0})$ at time $t_{0}$, the solutions read
	\begin{align}\label{eq:FlowRepulsor}
	\left\{
	\begin{aligned}
	&r(t) = p_{0} \cosh(t)+q_{0}\sinh t, \\
	&\theta(t)= p_{0} \sinh(t)+q_{0}\cosh t.
	\end{aligned}
	\right.
	\end{align} 	
	Due to the symmetries, one might generically take initial conditions in the first quadrant and write  $(q_{0},p_{0})=(\sqrt{2E}\cosh t_{0},\sqrt{2E}\sinh t_{0})$. Then Eq.\,(\ref{eq:FlowRepulsor}) becomes
	\begin{align}
	\left\{
	\begin{aligned}
	&r(t) = \sqrt{2E} \cosh(t+t_{0}), \\
	&\theta(t)= \sqrt{2E} \sinh(t+t_{0}).
	\end{aligned}
	\right.
	\end{align} 
	We have
	\begin{align}
	\textrm{LD}(r_{0},\theta_{0};t)=
	\int_{0}^{t}
	\sqrt{\dot{r}^{2}(s)+\dot{\theta}^{2}(s)} \dd s
	=
	\sqrt{2E}\kappa(t)
	\end{align}
	from which follows the equality announced.
\end{proof}	
\section{Geometric Lagrangian descriptors for Duffing's oscillator}\label{sec:Duffing}
We repeat our previous steps on Duffing's Hamiltonian model  
\begin{align}
\mathcal{H}=\frac{y^{2}}{2}
- \frac{x^{2}}{2} + \frac{x^{4}}{4}, \, (x,y) \in \mathbb{R}^{2},
\end{align}
supporting an $8$-shaped separatrix. All trajectories are 
bounded, the phase space is shown in Fig.\,\ref{fig:PSDuffing}.  
The equilibrium $(x,y)=(0,0)$ is a saddle (with the corresponding energy $\mathcal{H}(0,0)=0$) and both equilibria $(x,y)=(0,\pm1)$ are elliptic. 
Given the symmetry of the phase space, if we restrict the domain to $x \ge 0$ and $y \ge 0$, we interpret the orbits as the planar curves, parametrised by $x$, defined by
\begin{align}\label{eq:PCDuffing}
y(x;E)=\frac{1}{\sqrt{2}} \sqrt{-x^{4}+2x^{2}+4E}. 
\end{align} 
By applying formula given in Eq.\,(\ref{eq:lE}) to Eq.\,(\ref{eq:PCDuffing}) leads to 
\begin{align}
\tilde{\ell}(E) = \int_{\mathcal{D}_{E}} 
\sqrt{1+\frac{2(x-x^{3})^{2}}{-x^4+2x^{2}+4E}} \, \dd y,
\end{align}
where 
\begin{align}
\left\{
\begin{aligned}
& \mathcal{D}_{E} = [0,(1+\sqrt{1+4E})^{1/2}], \,\textrm{if} \ E < 0,  \notag \\
& \mathcal{D}_{E} = [x_{1},x_{2}], \textrm{otherwise}
\end{aligned}
\right.
\end{align}
with
\begin{align}
x_{1}=\sqrt{1-\sqrt{1+4E}}, \, x_{2}=\sqrt{1+\sqrt{1+4E}}. 
\end{align}
With this set of formulas, we have $\ell(E)=4\tilde{\ell}(E)$.

The panel in Fig.\,\ref{fig:PanelDuffing} presents the observables considered in section \ref{sec:pendulum}. The length $\ell(E)$ is a local maximal in the vicinity of the separatrix $E=0$, where again $\ell(E)$ is not differentiable. This property underpins the ability to reconstruct the phase space via the length $\ell(E)$ and the $B$ map. 
Although we are not able to demonstrate analytically the form of the divergence, the semi-analytical apparatus provides evidence that $\vert \dd \ell(E)/\dd E\vert$ scales again as $\mathcal{O}(1/\sqrt{\vert E \vert})$ when $E \to 0$ where the temporal LD cannot be straightforwardly estimated. 

\begin{figure}
	\centering
	\includegraphics[width=0.6\linewidth]{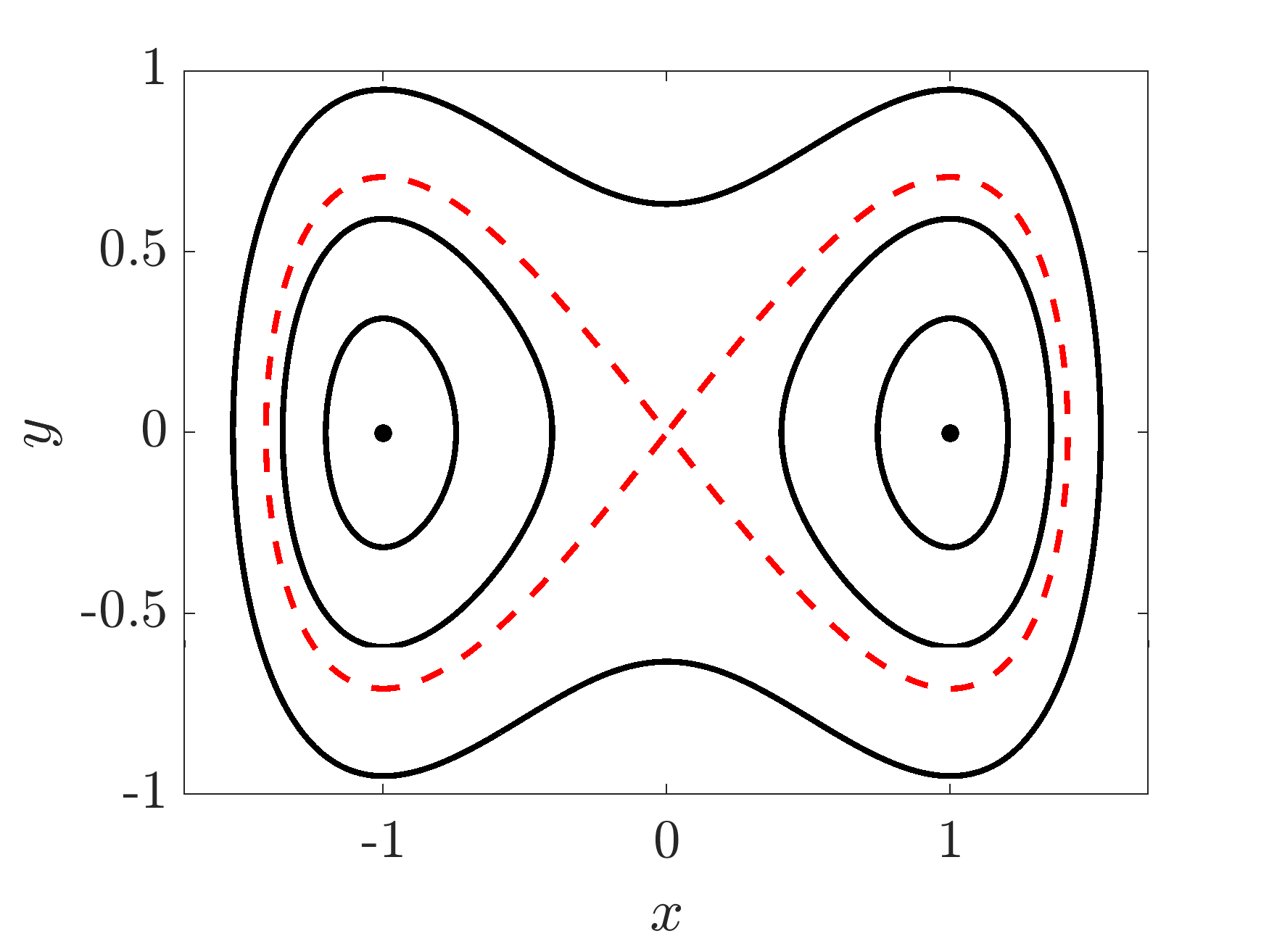}
	\caption{Phase space of Duffing's oscillator supporting the $8$-shaped separatrix.}
	\label{fig:PSDuffing}
\end{figure}

\begin{figure}
	\centering
	\includegraphics[width=1\linewidth]{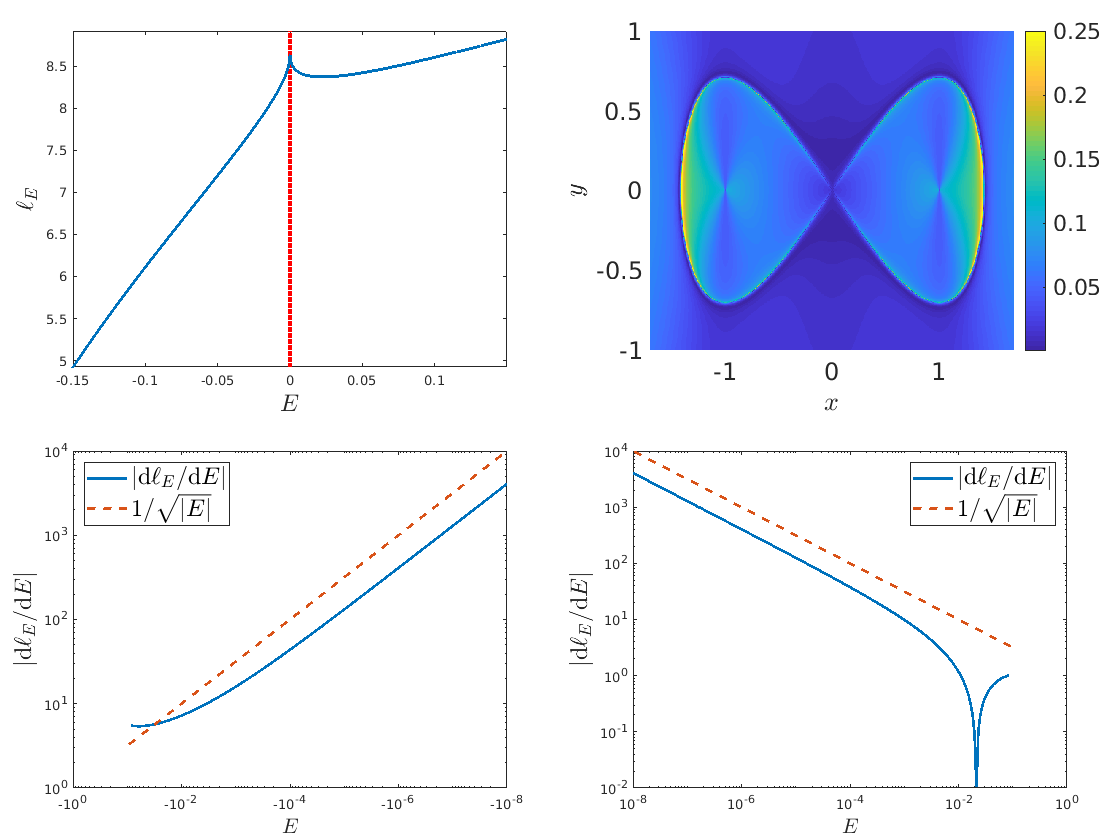}
	\caption{
		$\ell(E)$ as a function of $E$, $B$ map, and divergence laws associated to Duffing's oscillator.	
	}
	\label{fig:PanelDuffing}
\end{figure}

\section{Geometric Lagrangian descriptors for the fish-tail separatrix}\label{sec:FishTail}
We now turn our attention to the model 
\begin{align}\label{eq:HamFish}
\mathcal{H}=y^{2}+x^{3}+6x^{2}-32,  \, (x,y) \in \mathbb{R}^{2},
\end{align}
supporting a fish-tail separatrix and unbounded motions, as shown by the level sets of Fig.\,\ref{fig:PSFishTail}. 
The phase space contain two equilibrium, the unstable point $(x,y)=(-4,0)$ and the elliptic fixed point at $(x,y)=(0,0)$. Contrarily to the former models, all values of $E$ lead to $\ell(E) = +\infty$. 
This can be easily understood by observing that any energy
level, also the one associated to librations, contains an unbounded branch.
In order to deal with a finite value of $\ell(E)$, we first ``artificially'' bound the configuration space and allow the $x$ variable to a closed interval $\mathcal{I}=[a,b] \subset \mathbb{R}$. 
Given the symmetry of the phase space, for $x \in \mathcal{I}$ and $y \ge 0$, 
the level curves are interpreted as the planar curves 
\begin{align}
y(x;E) = \sqrt{-x^{3}-6x^{2}+E+32}. 
\end{align}  
Formula (\ref{eq:lE}) becomes
\begin{align}
\tilde{\ell}(E)=\int_{\mathcal{D}_{E}} 
\sqrt{1 + \frac{(-3x^{2}-12x)^{2}}{4(-x^{3}-6x^{2}+E+32)}} \, \dd y.
\end{align}
Let us now clarify the domain $\mathcal{D}_{E}$. For circulational orbits ($E>0$), we have
\begin{align}
\mathcal{D}_{E} = [x_{1},x_{2}],
\end{align}
where $x_{1}=a$ and $x_{2}=p^{1/3}+4p^{-1/3}-2$ where 
\begin{align}
p=\frac{1}{2}\sqrt{E(E+32)}+\frac{1}{2}(E+32)-8.
\end{align} 
For librational orbits, we have 
\begin{align}
\mathcal{D}_{E} = [x_{1},x_{2}] \cup [x_{3},x_{4}],
\end{align}
where $x_{1}=a$ and $\{x_{2},x_{3},x_{4}\}$ are the roots, ordered in increasing values, of the polynom 
$P \in \mathbb{R}^{3}[X]$ given by $P_{E}(X)=-X^{3}-6X^2+E+32$.
Under this set of formulas, we have $\ell(E)=2\tilde{\ell}(E)$.

The landscape $\ell(E)$ as a function of $E$, $B$ map and divergence laws are shown in Fig.\,\ref{fig:PanelFish}. 
For this particular set of computations, we considered $a=-5$. 
As for the pendulum and Duffing's models, $\ell(E)$ reaches a local maximum on the separatrix $E=0$ and forms a cusp point. The B map is thus able to delineate the separatrix.
By using the formula presented, we find again that the
divergence laws scale as $\mathcal{O}(1/\sqrt{\vert E\vert})$. Those observations rely a priori on the choice $a$, however, we have been able to reproduce them by considering 
a finite number of points $\{a_{i}\}_{i}$ such that $a_{i} \to -\infty$. Alternatively,  we can limit our analysis to the case of "bounded librations" with $E \le 0$ and $x \ge -4$ and study the finite lengths $\ell(E)$.  

\begin{figure}
	\centering
	\includegraphics[width=0.6\linewidth]{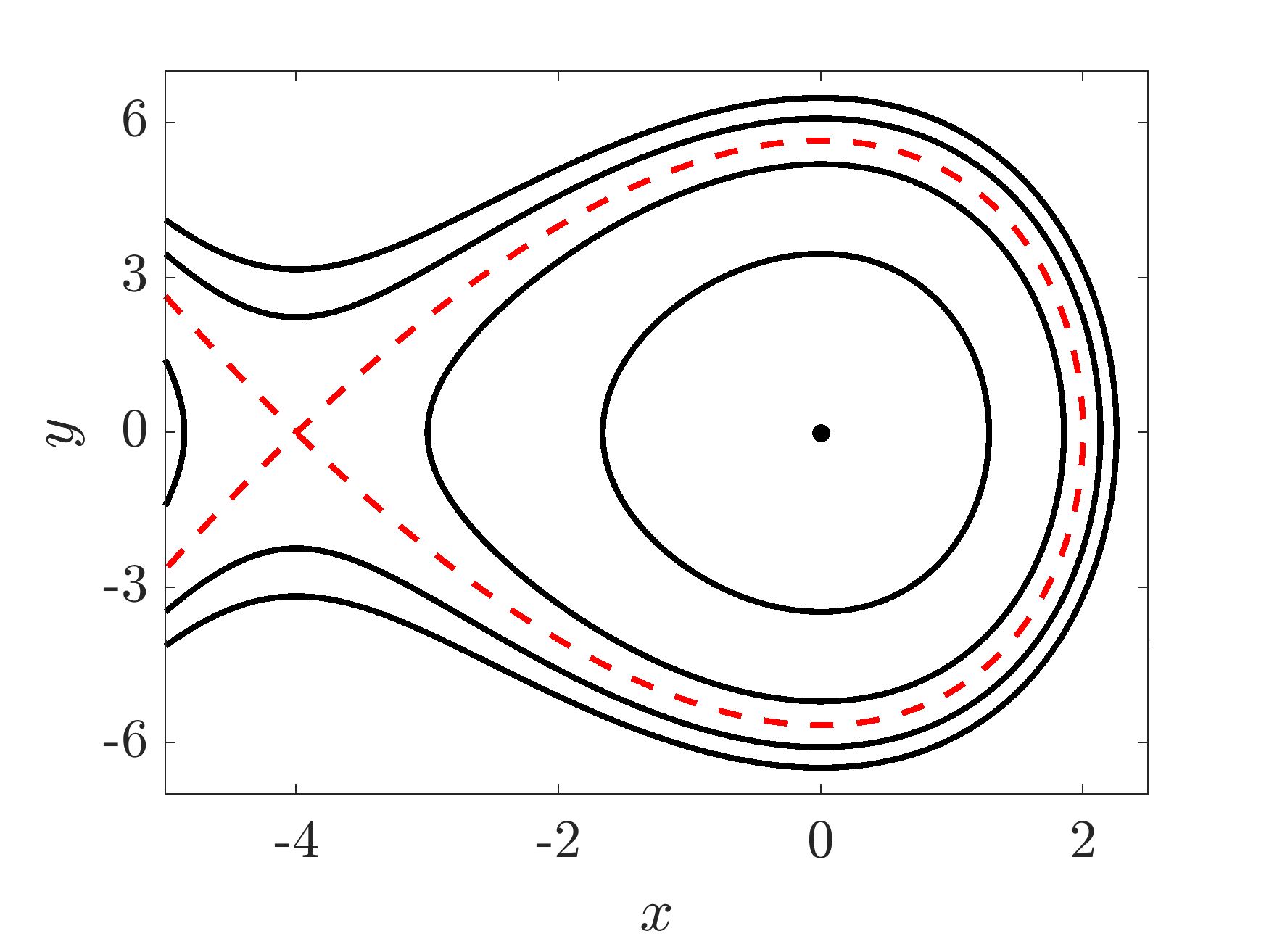}
	\caption{Fish-tail separatrix supporting unbounded motions.}
	\label{fig:PSFishTail}
\end{figure}

\begin{figure}
	\centering
	\includegraphics[width=1\linewidth]{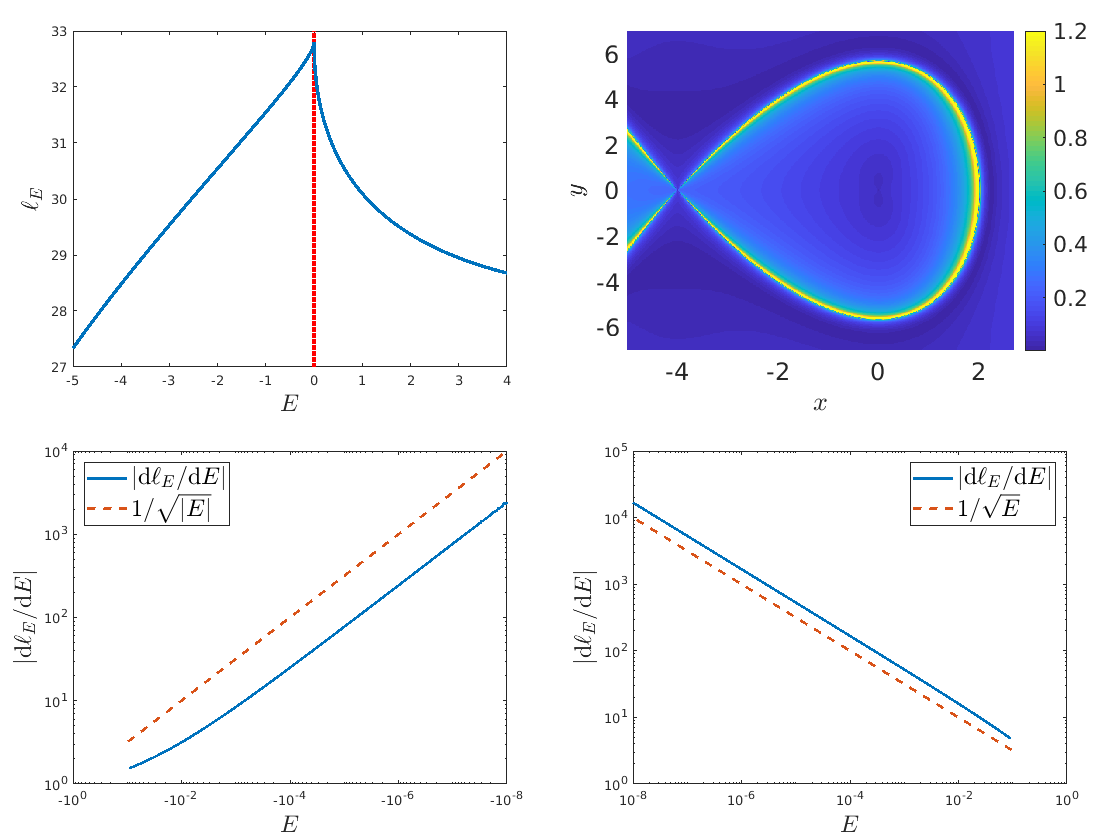}
	\caption{
		$\ell(E)$ as a function of $E$, $B$ map, and divergence laws associated to the model in Eq.\,(\ref{eq:HamFish}) supporting a fish-tail separatrix.	
	}
	\label{fig:PanelFish}
\end{figure}

\section{Conclusion}
Leveraging on the classical Lagrangian Descriptors for autonomous vector fields, this work has introduced a geometrical parametrisation for 1 degree-of-freedom system having the energy as first integral. 
The geometrical Lagrangian Descriptor corresponds to the length
$\ell(E)$ of the level curve labeled by the energy level $E$. 
It has a clear physical interpretation and is in particular free of the time variable. 
We have discussed applications of this framework on several and classical potential systems supporting the cat's eye, 8-shaped and fish-tail separatrices. 
For the models investigated, we have found the indicator to be a local maximum and cusp point on the energy labeling the separatrix. 
Alike the method of painting the energy integral over the phase space, 
the properties of the length metric permit to delineate successfully the separatrices of  dynamical systems. 
In addition to providing geometrical insights on the mechanisms driving Lagrangian Descriptors,     
the apparatus has  also conveniently allowed to  
characterise the rate at which $\vert \dd \ell(E)/\dd E \vert$ becomes singular near critical energies. 
Independent of the topology of the separatrices, we have revealed power-laws scaling as 
$\mathcal{O}(1/\sqrt{\vert E\vert})$.

\section*{acknowledgement}
	J.\,D. acknowledges funding from the ``Fonds de la Recherche Scientifique'' - FNRS and from the Australian Government through the round 9 Cooperative Research Centre Project ``A sensor network for integrated Space Traffic Management for Australia''.
	The authors acknowledge discussions with 
	Ana Maria Mancho, Makrina Agaoglou and Guillermo Garcia-Sanchez 
	that have ensued the 2nd Online Conference on Nonlinear Dynamics and Complexity, October $2021$. 

\bibliographystyle{apalike}
\bibliography{biblio}

\end{document}